\let\proof\@undefined\let\endproof\@undefined\makeatother
\newtheorem{theorem}{Theorem}
\newtheorem{lemma}[theorem]{Lemma}
\newtheorem{corollary}[theorem]{Corollary}
\newtheorem{prop}[theorem]{Proposition}
\definecolor{darkblue}{rgb}{0.0,0.0,0.3}
\def\footnoterule{\relax%
	\kern-5pt
	\hbox to \columnwidth{\hfill\vrule width .9\columnwidth height 0.4pt\hfill}
	\kern4.6pt}
\title{\LARGE \bf  On Feedback Control in Kelly Betting: An  Approximation Approach
}
\author{\large Chung-Han Hsieh$^{*}$ 
	\thanks{\hskip -10pt ${}^*$Chung-Han Hsieh was with Department of Electrical and Computer Engineering, University of Wisconsin, Madison, WI 53706, USA. E-mail: \href{mailto:chunghan.hsieh@wisc.edu}{chunghan.hsieh@wisc.edu}. 	
		The results in this paper was evolved from a technical note for a  research project when the author studied in the Department of Mathematics, University of Wisconsin, Madison.
		The author would like to thank his dissertation advisor Professor B. Ross Barmish for discussing this line of research with the author.
	}
}
\begin{document}

	\maketitle
	
	
	\begin{abstract}
		In this paper, we consider a simple discrete-time optimal betting problem using the celebrated \textit{Kelly criterion}, which calls for maximization of the expected logarithmic growth of wealth.  
		While the classical Kelly betting problem can be solved via standard concave programming technique, an alternative but attractive approach is to invoke a Taylor-based approximation, which recasts the problem into quadratic programming and obtain the closed-form approximate solution. 
		The focal point of this paper is to fill some voids  in the existing results by providing some interesting properties when such an approximate solution is used. 
		Specifically, the best achievable betting performance, positivity of expected cumulative gain or loss and its associated variance, expected growth property, variance of logarithmic growth, and results related to the so-called \textit{survivability} (no bankruptcy) are provided.     
	\end{abstract}


	\section {Introduction}
	\label{Section: Introduction}
	Betting based on the celebrated Kelly criterion  \cite{Kelly_1956}, a prescription for optimal resource apportionment during favorable gambling games, has received a considerable attention in the literature; e.g., see \cite{Thorp_2006,Garlappi_Skoulakis_2011,Nekrasov_2014,Nekrasov_2014_book,Rising_Wyner_2012, MacLean_Thorp_Ziemba_2011,Cover_Thomas_2012,Maclean_Thorp_Ziemba_2010}. 
	Our focal point for this paper is to examine the maximization problem using Taylor-based approximation approach, which is frequently used in finance literature; e.g., see~\mbox{\cite{Thorp_2006,Garlappi_Skoulakis_2011,Nekrasov_2014,Nekrasov_2014_book,Rising_Wyner_2012}}. 
	It is well-known that this approximation-based method can lead to a solution which provides a certain insight on the risk-return tradeoffs and already achieved some successes in several empirical studies; e.g., see~\mbox{\cite{Nekrasov_2014,Rising_Wyner_2012,Nekrasov_2014_book}} and \cite{Pulley_1983}.
	
	In this regard, our aim in this paper is to fill the voids  in the existing results by exploring the properties of the approximate optimum. Several technical results such as best achievable upper bound performance, positivity of expected cumulative gain or loss and it associated variance, and results related to  \textit{survivability}; i.e., no-bankruptcy, in betting are~provided. 
	We should note here that the survivability issue is indeed closely related to the positivity issue of a state in system theory; e.g., see \cite{Hsieh_Gubner_Barmish_TAC2019}.
	
	To complete this brief overview, we mention a sampling of more recent work in the theory of Kelly Betting; e.g., see  \cite{Hsieh_Barmish_2015,Hsieh_Barmish_Gubner_2016,Hsieh_Gubner_Barmish_CDC2018,Hsieh_Barmish_Gubner_ACC2018},  an interesting application to option trading~\cite{Wu_Chung_2018}, and a rather comprehensive survey~\cite{MacLean_Thorp_Ziemba_2011} covering many of the most important papers. 
	%
	%

	\section{Problem Formulation}
	\label{Section 2: Problem Formulation}
	For $k=0,1,2,...\,$, let $X(k)$ be the \textit{returns} specified by the ``house" at stage $k$. We assume that the returns are bounded; i.e.,~\mbox{$X_{\min} \leq X(k) \leq X_{\max}$}
	with $X_{\min}$ and $X_{\max}$ being points in the support and satisfying
	\mbox{$
	X_{\min} < 0 < X_{\max}.
	$}
	In the sections to follow, we assume further that the random variables~$X(k)$ are independent and identically distributed~(i.i.d.). 
	
	\subsection{Betting Function and Account Value Dynamics}
	For stage $k=0,1,\ldots, N$, let $V(k)$ be the account value at stage $k$ and define a mapping \mbox{$u: \mathbb{N}\cup\{0\} \to \mathbb{R}$} to be the \textit{betting function} satisfying 
	$$
	u(k) := KV(k)
	$$
	where~\mbox{$K \in \mathcal{K} \subset \mathbb{R}$} with $\mathcal{K}$ being an  interval constraint on~$K$ which captures some practical betting restrictions\footnote{For example, as seen in Lemma~\ref{lemma: Survival Lemma} in Section~\ref{Section 2: Problem Formulation}, by taking \mbox{$\mathcal{K} := (-1/X_{\max}, 1/|X_{\min}|)$}, then we assure  \textit{survivability} (no-bankruptcy); i.e., $V(k)>0$ for all $k$ and all sample paths $X(0),X(1),\ldots, X(k-1)$.  As a second example, if $\mathcal{K} := [-1,1]$ which corresponds to the so-called {\it cash-financed} condition in finance. That is, $|u(k)| \leq V(k)$ for all $k$. }
	That is,   at stage $k$, the betting function $u(k)$ is proportion of the \textit{account value}~$V(k)$ being invested. 
	Note here that our formulation can be viewed as a game with two players: the \textit{bettor} and the \textit{house}. In this point of view, the negative~$K$ may be interpreted as betting on the ``other" side of the game; i.e., taking the role of the~house.  
	With initial account value~\mbox{$V(0)>0$}, the account value dynamics is determined by the stochastic recursive equation
	$$
	V(k+1) 	 = V(k) + u(k)X(k).
	$$
	Thus, the account value at terminal stage $N$ is readily  obtained as follows
	\[
	V\left( N \right) = \prod\limits_{k = 0}^{N-1} {\left( {1 + KX(k)} \right)} {V(0)}.
	\]

	\subsection{Feedback Control System Point of View}
	Throughout this paper, the approach we take  involves a  control-theoretic point of view. 
	In this regard, the language we use in this paper is consistent with a  growing body of the literature addressing finance problems but originating from the control community; e.g., see~\cite{Hsieh_Barmish_2015,Hsieh_Barmish_Gubner_2016,Hsieh_Gubner_Barmish_CDC2018,Hsieh_Barmish_Gubner_ACC2018}. Specifically, we view~$V(k)$ as the \textit{state} of a system with linear feedback control 
	\mbox{$u(k) = KV(k)$}
	where the constant~$K$ is  viewed as a {\it feedback gain}. 
	%
	
	\subsection{Survivability Considerations}\label{Survival Issue}
	One of the most important property that an solution~$K$ for the Kelly's maximization problem must hold is the so-called {\it survival} (no-bankruptcy) property. 
	That is, the feedback gain~$K$ must assure the account value $V(k) > 0$ for all~$k=0,1,...$. 
	The following result  characterizes the survival condition.

	\begin{lemma}[{\bf Survivability}]\label{lemma: Survival Lemma}
		{\it The survival condition holds; i.e., $V(k) > 0$ for all $k \geq 0$ and all sample paths $X(0),X(1),\ldots,X(k-1)$ if and only if the feedback gain~$K$~satisfies the inequality $$
			\frac{-1}{ X_{\max}} < K < \frac{1} {|X_{\min}|}.
			$$ }
	\end{lemma}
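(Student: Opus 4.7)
The plan is to reduce the multi-stage survivability condition to a one-stage positivity condition on the factor $1 + KX(k)$, and then exploit monotonicity in $X$ to turn that into a bound on $K$.

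First I would start from the closed-form expression already derived in the excerpt,
\[
V(k) = V(0)\prod_{j=0}^{k-1}\bigl(1 + KX(j)\bigr),
\]
and observe that, since $V(0) > 0$, the condition $V(k) > 0$ for every $k$ and every sample path is equivalent to $1 + KX(j) > 0$ for every $j$ and every admissible realization of $X(j)$. Because the $X(j)$ are i.i.d.\ with support in $[X_{\min}, X_{\max}]$, quantifying over sample paths is the same as quantifying over all $x \in [X_{\min}, X_{\max}]$.

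Next, I would note that $x \mapsto 1 + Kx$ is affine in $x$, hence monotone, so its strict positivity on the closed interval $[X_{\min}, X_{\max}]$ is equivalent to strict positivity at the two endpoints:
\[
1 + K X_{\min} > 0 \quad\text{and}\quad 1 + K X_{\max} > 0.
\]
Using $X_{\min} < 0 < X_{\max}$, the first inequality rearranges to $K < 1/|X_{\min}|$ and the second to $K > -1/X_{\max}$, giving the stated two-sided bound. This handles the sufficiency direction.

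For necessity, suppose $K$ violates one of the bounds, say $K \ge 1/|X_{\min}|$. Because $X_{\min}$ lies in the support of $X$, there is positive probability that $X(0)$ takes a value arbitrarily close to (or equal to) $X_{\min}$; for such a realization, $1 + KX(0) \le 0$, so $V(1) \le 0$ on a sample path of positive probability, contradicting survivability. The symmetric argument handles $K \le -1/X_{\max}$ using $X_{\max}$. I do not expect any real obstacle here; the only subtlety worth stating cleanly is the reduction from ``all sample paths'' to ``all $x$ in the support,'' which relies on the hypothesis that $X_{\min}$ and $X_{\max}$ are genuinely points in the support of $X$.
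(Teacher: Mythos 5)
Your proposal is correct and follows essentially the same route as the paper: sufficiency by using that $x \mapsto 1 + Kx$ is affine, so positivity on $[X_{\min}, X_{\max}]$ reduces to the two endpoints (the paper runs this inside an induction on $k$ rather than via the closed-form product, a cosmetic difference), and necessity by forcing $1 + KX \le 0$ at a support endpoint. The support-versus-interval subtlety you flag is handled at the same level of rigor as in the paper's own argument, so there is no gap to repair.
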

	
	\vspace{0mm}
	\begin{proof} To prove sufficiency, assume the inequality on  $K$ holds and we must show $V(k)>0$ for all $k$ and all sample paths. Note that $V(0)>0$. We proceed a proof   by induction. Assume $V(k)>0$ along any sample path $(X(0),X(1),\ldots,X(k-1))$, we must show~\mbox{$V(k+1)>0$}.  Note that for $w \in [X_{\min}, X_{\max}]$,
		\begin{align*}
		V(k+1) &= V(k)+X(k) KV(k) \\
		&\geq \min_{w \in [X_{\min}, X_{\max}]} \{V(k)+ wKV(k)  \}.
		\end{align*}
		Since the function to be minimized above is affine linear in~$w$,  its minimal value is achieved when \mbox{$w \in \{X_{\min},X_{\max}\}$}; see~\cite{barmish_1994}. 
		Therefore, to establish the desired survivability,  using the assumed inequality on~$K$ and the inductive hypothesis that $V(k)>0$,
		it follows that
		\[
		\frac{-1}{X_{\max}}V(k)< \underbrace{KV(k)}_{=u(k)} < \frac{1}{|X_{\min}|}V(k).
		\]
		Now, for $w=X_{\max}$, we have $V(k)+u(k)X_{\max}>0$. For $w=X_{\min}$, we again obtain $V(k)+u(k)X_{\min}> 0$. Thus, it follows that 
		$
		V(k+1)> 0. 
		$
		
		To prove necessity, assuming $V(k)>0$ for all $k\geq 0$ and all sample paths $X(0),X(1),\ldots, X(k-1)$, we have \mbox{$V(k+1)>0$}. We now show that the desired inequality on~$K$ holds. 
		Observe that
		\begin{align*}
		V(k+1) 
		&= (1+KX(k))V(k)>0
		\end{align*}
		which implies that
		$
		1+KX(k)>0
		$
		for all $k$.
		Thus, it follows that
		$
		1+KX_{\max}>0
		$
		and $1+KX_{\min}>0$, which leads to the desired inequality on~$K.$
	\end{proof}
	


	\subsection{Optimal Feedback Gain via Kelly Criterion}
	Having set up the control-theoretic framework, a subsequent important question we considered is as follows: How does one  choose a feedback gain $K$ so that the performance is regarded as ``optimal?" As mentioned in Section~\ref{Section: Introduction}, the Kelly criterion used in~\cite{Thorp_2006}  and~\cite{Nekrasov_2014}, suggests to maximize the expected logarithmic growth rate
	\[
	g(K) :=  \frac{1}{N} \mathbb{E} \left[ \log  \frac{V(N)}{V(0)} \right].
	\]
	With the aid of i.i.d. assumption on $X(k)$, we obtain
	\begin{align*}
	g(K) &= \frac{1}{N}\mathbb{E}\left[ {\log \left( {\prod\limits_{k = 0}^{N - 1} {\left( {1 + K X\left( k \right)} \right)} } \right)} \right]\\[.5ex] 
	&= \frac{1}{N} {\sum\limits_{k = 0}^{N - 1} \mathbb{E} [{\log \left( {1 + K X\left( k \right)} \; \right)]} } \\[.5ex]
	& =  \mathbb{E}[\;\log(1 + K X(0))\;].
	\end{align*}
	Our goal is to find an optimal feedback
	$$
	K \in \mathcal{K} := \bigg[ \frac{-1}{X_{\max}}, \frac{1}{X_{\min}} \bigg]
	$$ 
	such that  the expected logarithmic growth rate is maximized. Namely, we consider
	\[
	\max_{K \in \mathcal{K} } g(K)= \max_{K \in \mathcal{K}} \mathbb{E}[ \log(1 + K X(0))]
	\]
	and $K^* \in \mathcal{K}$ satisfying 
	$g^*:= g(K^*) = \max_{K \in \mathcal{K} } g(K)$ is called a (true) \textit{optimal feedback gain}.
	It is well-known that the optimization problem above forms a concave program since~$g(K)$ is concave in $K$ and the interval constraint $\mathcal{K}$ is, of course, convex; e.g., see also in~\cite{Hsieh_Barmish_Gubner_2016} and~\cite{Boyd_2004}  for a discussion of this topic. 
	In the sequel, we denote $g^*$ to be the (true) optimal performance.

	\section{Preliminary Characterization for Optimum}
	In practice, other than the constraint $K \in \mathcal{K}$, the so-called \textit{cash-financed} constraint $K \in [-1,1]$ is often imposed to assure  $|u(k)|\leq V(k)$ for all $k$. In this setting, mild assumptions on $X_{\min}$ and $X_{\max}$ lead to the following characterization of the optimal feedback gain. The result below can be viewed as an  extension of our Sufficiency Theorem stated in~\cite{Hsieh_Barmish_Gubner_ACC2018}, which includes the case for $K<0$.
	
	\begin{theorem}[\textbf{Characterizing Cash-Financed Optimum}]\label{thm: Cash-Fianced Optimum}
		Let $K \in [-1,1] \cap \mathcal{K}$ and assume the $X_{\max}$ and~$X_{\min}$ satisfy $-1<X_{\min}<0<X_{\max}<1$. Then the optimal feedback gain $K^*$ satisfies
		\[
		K^{*}= \begin{cases}
		1,& \mathbb{E}[1/(1+X(0))] \leq 1; \\
		-1, & \mathbb{E}[{1/(1-X(0))}] \leq 1.	\end{cases}
		\] 
	\end{theorem}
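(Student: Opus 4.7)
The plan is to exploit the concavity of $g(K)=\mathbb{E}[\log(1+KX(0))]$ established in the preceding section and reduce the problem to checking the sign of a one-sided derivative at each endpoint of $[-1,1]$. First, I would observe that the feasible set $[-1,1]\cap\mathcal{K}$ simplifies to $[-1,1]$: since $-1<X_{\min}<0<X_{\max}<1$, we have $-1/X_{\max}<-1$ and $1/|X_{\min}|>1$, so the survivability interval from Lemma~\ref{lemma: Survival Lemma} contains $[-1,1]$ as a proper subset. In particular, Lemma~\ref{lemma: Survival Lemma} guarantees $1+KX(0)>0$ almost surely for every $K\in[-1,1]$, which is what makes $g$ finite and smooth on this set.

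Next, I would differentiate $g$ by interchanging derivative and expectation, justified by dominated convergence: on a closed neighborhood of either endpoint, the integrand $X(0)/(1+KX(0))$ is uniformly bounded because $|X(0)|\le\max\{|X_{\min}|,X_{\max}\}<1$ keeps the denominator uniformly away from zero. This yields
\[
g'(K)=\mathbb{E}\!\left[\frac{X(0)}{1+KX(0)}\right].
\]
Using the algebraic identities $x/(1+x)=1-1/(1+x)$ and $x/(1-x)=-1+1/(1-x)$ specialized to $K=\pm 1$, I obtain
\[
g'(1)=1-\mathbb{E}\!\left[\frac{1}{1+X(0)}\right], \qquad g'(-1)=-1+\mathbb{E}\!\left[\frac{1}{1-X(0)}\right].
\]

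Finally, I would invoke first-order optimality for a concave function on a compact interval. If $\mathbb{E}[1/(1+X(0))]\le 1$ then $g'(1)\ge 0$; by concavity, $g'$ is nonincreasing, so in fact $g'\ge 0$ on all of $[-1,1]$, meaning $g$ is nondecreasing and the maximum is attained at $K^{*}=1$. Symmetrically, if $\mathbb{E}[1/(1-X(0))]\le 1$ then $g'(-1)\le 0$, and the same monotonicity argument shows $g'\le 0$ throughout $[-1,1]$, so $K^{*}=-1$.

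The only delicate step is the justification of differentiation under the expectation; this is where the strict inequalities $|X_{\min}|,X_{\max}<1$ matter, as they ensure $1+KX(0)$ stays bounded away from zero uniformly in a neighborhood of each endpoint. All remaining work is routine algebraic manipulation combined with the standard endpoint-optimality principle for concave functions, so I do not anticipate any serious obstacle beyond verifying this regularity.
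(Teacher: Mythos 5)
Your proof is correct, and it takes a cleaner, more self-contained route than the paper. The paper splits the two cases asymmetrically: for $\mathbb{E}[1/(1+X(0))]\leq 1$ it deduces $\mathbb{E}[X(0)]\geq 0$, hence $K^*\geq 0$, and then simply cites the Sufficiency Theorem from prior work \cite{Hsieh_Gubner_Barmish_CDC2018} to get $K^*=1$; for $\mathbb{E}[1/(1-X(0))]\leq 1$ it argues $K^*\leq 0$ and then bounds the derivative pointwise on $[-1,0]$ via the inequality $z/(1+Kz)\leq z/(1-z)$ for $K\in[-1,0]$, $-1<z<1$. You instead treat both cases symmetrically by a single endpoint first-order condition: compute $g'(\pm 1)$ via the identities $x/(1+x)=1-1/(1+x)$ and $x/(1-x)=-1+1/(1-x)$, note that the hypotheses say exactly $g'(1)\geq 0$ or $g'(-1)\leq 0$, and conclude from concavity (nonincreasing $g'$) that $g$ is monotone on all of $[-1,1]$, so the maximum sits at the corresponding endpoint. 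In substance your second case is the paper's argument in disguise—the paper's inequality $z/(1+Kz)\leq z/(1-z)$ is precisely the statement that the derivative integrand is nonincreasing in $K$, i.e., concavity—but your version avoids the preliminary sign argument for $K^*$ and makes the $K^*=1$ case self-contained rather than delegated to an external theorem. Your attention to the regularity issues is also sound: since $-1<X_{\min}$ and $X_{\max}<1$, we have $[-1,1]\subset(-1/X_{\max},1/|X_{\min}|)$, so $1+KX(0)\geq 1-\max\{|X_{\min}|,X_{\max}\}>0$ uniformly for $K\in[-1,1]$, which justifies both the interchange of derivative and expectation and the simplification $[-1,1]\cap\mathcal{K}=[-1,1]$.
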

	
	\begin{proof} If $\mathbb{E}[1/(1+X(0))]\leq 1$, then \mbox{$\mathbb{E}[X(0)] \geq 0$}. In combination with the fact that the returns sequences are i.i.d., the optimal feedback gain $K^*$ must be nonnnegative. Hence, using the Sufficiency Theorem; see the result and the detailed proof in our prior work~\cite{Hsieh_Gubner_Barmish_CDC2018}, it follows that $K^*=1$. 
		
		Next, assuming that $\mathbb{E}[{1/(1-X(0))}] \leq 1$, we must show that $K^*=-1$. 
		To see this, we first note that $\mathbb{E}[{1/(1-X(0))}] \leq 1$, which implies that $\mathbb{E}[X(0)]\leq 0$. Hence, the optimal element must be nonpositive. 
		Therefore, it suffices to show that $g(K)$ is nonincreasing for~\mbox{$K \in [-1,0]$.} 
		Beginning with
		\begin{align*}
		\frac{d}{dK}g(K)
		&= \frac{d}{{dK}} \mathbb{E}\left[ {\log (1 + K X(k))} \right]
		\end{align*}
		and noting that $X(k)$ is bounded, results in measure theory, for example, see~\cite{Folland}, allow us to commute the differentiation and expectation operators above. Hence,
		\begin{align*}
		\frac{d}{{dK}}\mathbb{E}\left[ {\log (1 + K X(k))} \right]
		&= \mathbb{E}\left[ {\frac{{X(k)}}{{1 + KX(k)}}} \right].
		\end{align*}
		Now note that the inequality
		$
		\frac{z}{1+Kz}  \leq \frac{z}{1-z} = \frac{1}{1-z}-1
		$
		holds for all~$K \in [-1,0]$ and all $-1<z<1$. Hence, with the aid of the inequality and using the fact that the $X(k)$ are i.i.d. with $X_{\max}<1$, we~obtain
		\begin{align*}
		\frac{d}{{dK}}\mathbb{E}\left[ {\log (1 + KX(k))} \right]
		&\leq  \mathbb{E}\left[ {\frac{1}{1 - X(k)}} \right]-1\\[.5ex]
		&=  \mathbb{E}\left[ {\frac{1}{1 - X(0)}} \right]-1.
		\end{align*}
		Using the assumed inequality that
		$
		\mathbb{E}\left[ \frac{1}{1 - X(0)} \right] \leq 1,
		$
		it follows that
		$
		\frac{d}{dK} g(K) \le 0
		$
		which shows that $g(K)$ is nonincreasing in~$K$. Hence~$g(K)$ is maximized at $K =  -1$ and the proof is complete. \qedhere

	\end{proof}
	
	\textbf{Remarks:} Since the return sequences $X(k)$ are i.i.d., the two inequality conditions stated in the theorem above; i.e., $\mathbb{E}[1/(1+X(0))] \leq 1$ and \mbox{$\mathbb{E}[1/(1-X(0))]\leq 1$}, only depend on $X(0)$. In addition, these two inequalities indeed play a role to measure the attractiveness of a gamble, which are so-called \textit{sufficient attractiveness inequalities}; see our prior work in~\cite{Hsieh_Gubner_Barmish_CDC2018} and~\cite{Hsieh_Barmish_Gubner_ACC2018} for further discussion on this topic.
	In fact, Theorem~\ref{thm: Cash-Fianced Optimum} gives sufficient conditions under which $K^\ast$ is  analytically
	computed. 
	Except by a few special cases, it is not possible for finding the analytical solution in general. To this end, in the next section to follow, an approximation using  Taylor
	expansion is used.
	

	\section{A Taylor-based Approximation Approach} 
	\label{SECTION3:A Taylor-based Approximation Approach}
	Instead of solving the concave optimization problem described in Section~\ref{Section 2: Problem Formulation}, our goal in this paper is to study the expected logarithmic growth using a Taylor-based approximation approach.
	According to \cite{Nekrasov_2014}, it enjoys an arguably lower  computational complexity than that solves the Kelly problem in continuous-time setting. 
	When one considers stock trading scenario and historical stock return data is used, it is well-known that the second-order approximation is good enough; see~\cite{Pulley_1983}. 
	Moreover, perhaps the most important advantage of using such approximation is that this approach leads to a closed-form solution to the ``approximated" Kelly maximization problem, which provides a degree of insight into the risk-return tradeoffs. 
	To establish this, 
	according to \cite{Nekrasov_2014, Rising_Wyner_2012,Pulley_1983,Vajda_2006,Nekrasov_2014_book}, instead of working with $g(K)$, it is possible to uses the Taylor expansion on~$g(K)$ around~\mbox{$K=0$} to obtain an approximate quadratic function
	\[
	\mathbb{E} \left[ \log(1 + K X(0)) \right]  \approx  K\mathbb{E} \left[ X(0) \right] -\frac{1}{2} K^2 \mathbb{E}\left[ X^2(0) \right].
	\] 
	Subsequently, one then seeks feedback gain $K$ such~that 
	$$
	\max_K \left\{ K\mathbb{E} \left[ X(0) \right] -\frac{1}{2} K^2 \mathbb{E}\left[ X^2(0) \right] \right\}.
	$$
	Under this setting, one faces to solve a \textit{quadratic programming} problem.
	It is easy to see that the ``approximate" optimum, call it $K=K_{approx}^*$, is given by 
	\[
	K_{approx}^* :=  \frac{{\mathbb{E}\left[ X(0) \right]}}{{\mathbb{E} \left[ {{X^2(0)}} \right]}},
	\]
	which is the solution obtained in \cite{Nekrasov_2014} and \cite{Nekrasov_2014_book}.
	In the sequel, we shall often use shorthand notations to denote~\mbox{$\mu := \mathbb{E}[X(0)]$} and $\sigma^2 := {\rm var}(X(0))$ and write\footnote{ In practice, the information of $\mu$ and $\sigma$ may not be available. The simplest way is to estimate these two quantities based on the observations, say $X(k)=x_k$ and work with \textit{sample mean} $\mu_{_N}$ and \textit{sample variance} $\sigma_N$; i.e.,
		\mbox{$
		\mu_{_N} := \frac{1}{N} \sum_{k=0}^{N-1}x_k
		$}
		and
		\mbox{$
		\sigma_N^2 := \frac{1}{N-1} \sum_{k=0}^{N-1} (x_k-\mu_{_N})^2.
		$} 
		With the aid of i.i.d. assumption of $X(k)$ with common mean $\mu$ and common variance~$\sigma^2$, the strong law of large numbers implies $\mu_{_N} \to \mu$ and $\sigma_N^2 \to \sigma^2$ as $N\to \infty$; see \cite{Gubner_2006}.} 
	\[
	K_{approx}^* = \frac{\mu}{\mu^2+\sigma^2}.
	\]
	In theory, the $K_{approx}^*$ can take any value on~$\mathbb{R}$.\footnote{Some of the literature, based on empirical data support; e.g., see~\cite{Rising_Wyner_2012}, assume that $\mathbb{E}[X^2(0)] \approx {\rm var}(X(0))$. This leads to an alternative approximate solution
		\[
		\widetilde{K}_{approx} = \frac{\mathbb{E}[X(0)]}{{\rm var}(X(0))} = \frac{\mu}{\sigma^2}
		\]
		and coincides with the celebrated Merton's formula in continuous-time setting; e.g.,  see~\cite{Merton_1969}. In this paper, whenever the approximate solution is referred, we mean~$K_{approx}$. The $\widetilde{K}_{approx}$ is left in commentary.} This property can be readily interpreted using financial market language as follows:  $K_{approx}^* >1$ corresponds to the \textit{leverage} and $K_{approx}^* <0$ corresponds to \textit{short selling}.
	
	\subsection{Survival Conditions Revisited: An Example}
	The Survival Lemma in Section~\ref{Section 2: Problem Formulation} tells us that any feedback gain $K \in \mathcal{K}$ assures that $V(k)>0$ for all $k$.  It is natural to examine the approximate solution $K_{approx}^*$ and see if it meets the survival requirement. The following example indicates that this needs not  be the case in general. 
	
	%
	For example, let $k = 0,1,\ldots, N$, suppose  a coin-flipping gamble with returns $X(k)$ takes two distinct values as \mbox{$X(k) = -0.9$} with probability~$0.05$ or $X(k) = 0.2$ with probability $0.95$.  Then, it is readily seen that the corresponding approximation optimum is
	$
	K_{approx}^* \approx 1.84 .
	$
	However, the~$K_{approx}^*$ is not within the survival range; i.e.,
	$$
	K_{approx}^* \notin  \mathcal{K} = (-5, 1.111).
	$$
	Moreover, consider the worst case sample path; i.e.,~\mbox{$X(k) = -0.9$} for all $k \geq 0$, with $V(0) = 1$, it follows  that 
	\mbox{$
		V(1) = 1+K_{approx}^*(-0.9) \approx  -0.656 < 0
		$}
	which fails to survive and we see a single-stage ruin with  probability which fails to survive as $p=.05$. 
	
	
	
	\subsection{Simple Remedy for Survival Issues}
	While the approximate solution does not meet the survivability in almost-sure sense in general,  
	one can easily restrict it back to the range where the Survival Lemma asks for. For example, one approach is to introduce the \textit{saturation function}; i.e., we define
	\[
	K_{sat,s}^* := SAT_s \left [ K_{approx}^* \right]
	\]
	where $SAT_s[x]$ is given by
	\[
	SAT_s\left[ x \right] :=
	\begin{cases}
	\frac{-1}{X_{\max}} & x< \frac{-1}{X_{\max}};\\
	x & \frac{-1}{X_{\max}} \leq x \leq \frac{1}{|X_{\min}|};\\
	\frac{1}{|X_{\min}|} & x>\frac{1}{|X_{\min}|}
	\end{cases}
	\]
	and the subscript $s$ in the saturation function is used to emphasize the survivability as requested in Lemma~\ref{lemma: Survival Lemma}.
	
	Thus, with the aid of saturation, the approximated feedback is always within the upper and lower
	bounds of $[-1/X_{\max}, 1/|X
	_{\min}|].$ To this end, in the analysis to follow, we assume that $K_{approx}^* \in \mathcal{K}$.
	Of course, the above is not the only remedy; one can also consider the \textit{logistic function} to obtain a smooth saturation.
	\section{Betting Performance Analysis}
	\label{SECTION: Betting Performance Analysis}
	In this section, using the approximate solution, we now provide several technical results such as the Best Possible Performance, the Cumulative Gain or Loss Function and its expected value, and a probabilistic quantification.
	
	\subsection{Best Possible Betting Performance}
	We begin with providing an estimate for the \textit{best} possible performance when $
	K_{approx}^*$ is used.
	
	\begin{lemma}[{\bf Best Possible Performance}]\label{Lemma: Best Possible Performance}
		{\it For $K=K_{approx}^*$, we have 
			$$
			g(K_{approx}^*)  \leq  \log \left(1+ \frac{ \mu^2 }{\mu^2 + \sigma^2} \right) \leq \log 2.
			$$
		}
	\end{lemma}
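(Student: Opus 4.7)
The plan is to obtain the first inequality as a direct application of Jensen's inequality, exploiting the concavity of $\log$, and then the second inequality as an elementary observation about the fraction $\mu^2/(\mu^2+\sigma^2)$.

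More specifically, I would first note that the assumption $K_{approx}^* \in \mathcal{K}$ (enforced via the saturation function $SAT_s$ discussed in the preceding subsection) together with Lemma~\ref{lemma: Survival Lemma} guarantees $1 + K_{approx}^* X(0) > 0$ almost surely, so that $\log(1 + K_{approx}^* X(0))$ is well-defined and integrable. Since $\log$ is concave on the positive reals, Jensen's inequality yields
\[
g(K_{approx}^*) \;=\; \mathbb{E}\bigl[\log(1 + K_{approx}^* X(0))\bigr] \;\leq\; \log\bigl(1 + K_{approx}^* \,\mathbb{E}[X(0)]\bigr) \;=\; \log\bigl(1 + K_{approx}^* \mu\bigr).
\]
Substituting the closed-form expression $K_{approx}^* = \mu/(\mu^2 + \sigma^2)$ from the previous section gives $K_{approx}^* \mu = \mu^2/(\mu^2+\sigma^2)$, which establishes the first inequality in the statement.

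For the second inequality, I would simply observe that $\sigma^2 \geq 0$ implies $\mu^2 \leq \mu^2+\sigma^2$, so that
\[
0 \;\leq\; \frac{\mu^2}{\mu^2+\sigma^2} \;\leq\; 1,
\]
and by monotonicity of $\log$ we conclude $\log(1 + \mu^2/(\mu^2+\sigma^2)) \leq \log 2$. (The edge case $\mu = \sigma = 0$ is trivial since then $K_{approx}^*$ can be taken as $0$ and $g \equiv 0$.)

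There is no real obstacle here; the only subtlety is the domain issue for the logarithm, which is precisely why the saturation step ensuring $K_{approx}^* \in \mathcal{K}$ was introduced in the previous subsection. Once that is in place, both inequalities follow in a couple of lines.
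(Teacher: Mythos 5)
Your proposal is correct and follows essentially the same route as the paper: Jensen's inequality on the concave logarithm gives $g(K_{approx}^*) \leq \log(1+K_{approx}^*\mu)$, substituting $K_{approx}^* = \mu/(\mu^2+\sigma^2)$ yields the first bound, and the observation $\mu^2/(\mu^2+\sigma^2) \leq 1$ gives the $\log 2$ bound. Your additional remarks on the domain of the logarithm and the degenerate case $\mu = \sigma = 0$ are sound but not needed beyond what the paper already assumes.
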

	
	\vspace{0mm}
	\begin{proof}
		To establish the desired upper bound, we fix $K \in \mathcal{K}$ and apply Jensen's inequality to obtain
		\begin{align*}
		g(K) & = \mathbb{E} [\log(1+KX(0))] \\
		& \leq \log(1+K \mathbb{E}[X(0)])\\
		& = \log(1+K \mu).
		\end{align*}
		Substituting $K=K_{approx}^*$ into the inequality above, we obtain
		$$	
		g(K_{approx}^*) \leq  \log \left(1+ \frac{ \mu^2 }{\mu^2 + \sigma^2} \right).
		$$
		To complete the proof, we note that since $\mu^2/(\mu^2+\sigma^2) \leq 1$, it follows that
		$
		g(K) \leq \log 2. 
		$ 
	\end{proof}
	
	\textbf{Remarks:} $(i)$ The upper bound in the lemma above is achievable when $\sigma^2 =0$. In practice, this is possible if one enters a game with  $X(k)$ being the riskless returns with riskless rate $r > 0$; i.e.,~$
	X(k) := r > 0
	$ with probability one.
	Then the associated approximate solution becomes $K_{approx}^* = 1/r$ and one sees
	\[
	g(K_{approx}^*) = \mathbb{E} [\log(1+ (1/r)r)] = \log 2.
	\]
	$(ii)$ For $0 \leq \theta \leq 1$, it is also interesting to note that one can apply the Paley-Zygmund inequality to obtain~\mbox{$
	P( |X(0)|> \theta \mu) \geq (1-\theta)^2 K_{approx}^*
	$}
	which might be useful to estimate the behavior of the returns.
	The reader is referred to \cite{Paley_Zygmund_1932} for a detailed discussion on this topic.  $(iii)$ If one applies Merton's formula 
	$K=\widetilde{K}_{approx}^*$, then it is readily shown that the upper bound in the lemma above becomes 
	$
	g(\widetilde{K}_{approx}^*) \leq \log \left(1+ { \mu^2 }/{ \sigma^2} \right).
	$

	\subsection{Cumulative Gain or Loss}
	Given any betting strategy, it is often important for a gambler to know what is the expected cumulative gain or loss and its associated variance. 
	To this end, we  define the  \textit{cumulative gain or loss function}, call it $\mathcal G_K$, as follows: Let $\mathcal G_K: \mathbb{N} \to \mathbb{R}$ with
	$
	\mathcal{G}_K(N) := V(N) - V(0)
	$
	where the subscript $K$ on~$\mathcal{G}_K$ above is used to emphasize the dependence on feedback gain~$K$. 
	Then the expected cumulative gain or loss function, call it $ \overline{\mathcal{G}}_K$, is given~by 
	$
	\overline{\mathcal{G}}_K(N) := \mathbb{E}[\mathcal{G}_K(N) ]
	$
	and we are now ready to provide the results to follow.
	
	\begin{lemma}[{\bf Expected Cumulative Gain or Loss}]\label{Lemma: Expected Cumulative Gain or Loss}
		{\it Given any linear feedback $K \in \mathcal{K}$ and integer~\mbox{$N>0$}, the expected cumulative gain or loss function is given~by
			$$
			\overline{\mathcal{G}}_K(N) = \left( (1+K\mu)^N -1 \right)V\left( 0 \right),
			$$
			and hence, for $
			K=K_{approx}^*
			$,
			\[
			\overline{\mathcal{G}}_{K_{approx}^*}(N) = \left( {{{\left( {1 + \frac{{{\mu ^2}}}{{{\mu ^2} + {\sigma ^2}}}} \right)}^N} - 1} \right)V\left( 0 \right).
			\]
		}
	\end{lemma}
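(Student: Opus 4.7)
The plan is to work directly from the closed form of the terminal wealth given earlier in the excerpt, namely
\[
V(N) = \prod_{k=0}^{N-1}\bigl(1+KX(k)\bigr)\, V(0),
\]
and to reduce the computation of $\overline{\mathcal{G}}_K(N) = \mathbb{E}[V(N)] - V(0)$ to an expectation of a product of independent factors.

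First, I would take expectations on both sides of the expression for $V(N)$, pulling the deterministic factor $V(0)$ out of the expectation. Next, I would invoke the i.i.d.\ assumption on the returns $X(k)$ (stated in Section~II of the paper) to justify factorizing
\[
\mathbb{E}\!\left[\prod_{k=0}^{N-1}\bigl(1+KX(k)\bigr)\right] = \prod_{k=0}^{N-1}\mathbb{E}\bigl[1+KX(k)\bigr] = (1+K\mu)^{N},
\]
where the last equality uses linearity of expectation together with $\mathbb{E}[X(k)] = \mu$ for all $k$. Substituting back yields $\mathbb{E}[V(N)] = (1+K\mu)^N V(0)$, and subtracting $V(0)$ gives the first claimed identity.

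For the corollary with $K = K_{approx}^*$, I would simply substitute $K_{approx}^* = \mu/(\mu^2+\sigma^2)$ and observe that $K_{approx}^*\,\mu = \mu^2/(\mu^2+\sigma^2)$, which plugged into $(1+K\mu)^N - 1$ produces the stated expression.

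The argument is essentially a one-line calculation once independence is used to split the expectation of the product. There is no real obstacle; the only point deserving a sentence of justification is the factorization step, which relies on the i.i.d.\ (in particular, mutual independence) hypothesis on $\{X(k)\}$ already in force throughout the paper, and the boundedness of the $X(k)$ which guarantees integrability of each factor $1+KX(k)$ for any $K\in\mathcal{K}$.
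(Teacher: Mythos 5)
Your proposal is correct and follows essentially the same route as the paper's proof: take the expectation of the product form of $V(N)$, use the i.i.d.\ hypothesis to factor the expectation into $(1+K\mu)^N$, subtract $V(0)$, and then substitute $K_{approx}^* = \mu/(\mu^2+\sigma^2)$. Your added remark on integrability of each factor (from boundedness of the returns) is a harmless extra justification the paper leaves implicit.
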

	
	\vspace{0mm}
	\begin{proof} With the aid of i.i.d. property of $X(k)$, it is readily verified that 
		\begin{align*}
		\overline{\mathcal{G}}_K(N) &= \mathbb{E}[V(N)] - V(0)  \\[.5ex]
		&= \mathbb{E}\left[ {\prod\limits_{k = 0}^{N - 1} {\left( {1 + KX(k)} \right)} V(0)} \right] - V(0) \hfill \\[.5ex]
		&= \left(  {\prod\limits_{k = 0}^{N - 1} {\left( {1 + K\mathbb{E}\left[ {X(0)} \right]} \right)} }  - 1 \right)V(0) \hfill \\[.5ex]
		&= \left( {{{\left( {1 + K\mu } \right)}^N} - 1} \right)V(0).  
		\end{align*} 
		To complete the proof, we substitute $K=K_{approx}^*$ into~$\overline{\mathcal{G}}_K(N)$ and $\overline{\mathcal{G}}_{K_{approx}^*}(N)$ is immediately obtained. 
	\end{proof}

	{\bf Remark:} 
	If one adopts the Merton's formula; i.e., 
	\mbox{$
		K=\widetilde{K}_{approx}^*,
		$}
	then 
	\[
	\overline{\mathcal{G}}_{\widetilde{K}_{approx}^*}(N) = \left( {{{\left( {1 + \frac{{{\mu ^2}}}{ \sigma ^2}} \right)}^N} - 1} \right)V\left( 0 \right).
	\]
	As seen in the corollary below, we can deduce more regarding the positivity of expected value of the cumulative gain or loss function.

	\begin{corollary}[\textbf{Positive Expectation Property}]{\it For~\mbox{$
				K=K_{approx}^*$}, the expected cumulative  gain or loss function is nonnegative; i.e.,
			$
			\overline{\mathcal{G}}_{K_{approx}^*} (N) \geq 0
			$ for all $N \geq 1$.
			Moreover, if $\mu \neq 0$, then~$\overline{\mathcal{G}}_{K_{approx}^*} (N) >0.$
		}
	\end{corollary}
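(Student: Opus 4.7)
The plan is to apply the preceding Lemma on Expected Cumulative Gain or Loss directly to obtain the closed-form expression
\[
\overline{\mathcal{G}}_{K_{approx}^*}(N) = \left(\left(1+\frac{\mu^2}{\mu^2+\sigma^2}\right)^N - 1\right)V(0),
\]
and then read off the sign of the right-hand side by analyzing the two factors $V(0)$ and $(1+\mu^2/(\mu^2+\sigma^2))^N - 1$ separately. The standing assumption $V(0)>0$ handles the first factor, so the entire argument reduces to controlling the sign of the bracketed term.

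For the first claim (nonnegativity), I would note that $\mu^2 \geq 0$ and $\mu^2+\sigma^2 \geq 0$, where the denominator is strictly positive whenever we are not in the trivial degenerate case (and if both $\mu$ and $\sigma$ vanish the approximate gain $K_{approx}^*$ is ill-defined, so we may exclude that corner). Therefore the ratio $\mu^2/(\mu^2+\sigma^2)$ lies in $[0,1]$, the base $1+\mu^2/(\mu^2+\sigma^2)$ is at least $1$, and raising it to any positive integer power $N \geq 1$ preserves the inequality $(1+\mu^2/(\mu^2+\sigma^2))^N \geq 1$. Subtracting $1$ and multiplying by $V(0)>0$ yields $\overline{\mathcal{G}}_{K_{approx}^*}(N) \geq 0$.

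For the second claim (strict positivity when $\mu \neq 0$), the only modification is that $\mu^2 > 0$ forces the ratio $\mu^2/(\mu^2+\sigma^2)$ to be strictly positive, so the base $1 + \mu^2/(\mu^2+\sigma^2)$ strictly exceeds $1$, and hence its $N$th power strictly exceeds $1$ for every $N \geq 1$. Combined with $V(0)>0$, this gives $\overline{\mathcal{G}}_{K_{approx}^*}(N) > 0$.

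There is no real obstacle here; the proof is essentially a one-line consequence of the preceding lemma together with elementary positivity of a real power. The only minor subtlety worth flagging explicitly is the degenerate case $\mu = \sigma = 0$, which should be ruled out as part of the standing hypothesis that $K_{approx}^*$ is well-defined.
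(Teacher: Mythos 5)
Your proof is correct and follows essentially the same route as the paper: invoke the closed-form expression from the preceding lemma, observe that $\bigl(1+\mu^2/(\mu^2+\sigma^2)\bigr)^N \geq 1$ with strict inequality when $\mu \neq 0$, and conclude using $V(0)>0$. Your explicit remark about excluding the degenerate case $\mu=\sigma=0$ is a reasonable (if minor) addition the paper leaves implicit.
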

	
	\begin{proof} The first statement is a simple consequence by the Lemma~\ref{Lemma: Expected Cumulative Gain or Loss}. That is, for $N \geq 1$,  we have 
		\[{{{\left( {1 + \frac{{{\mu ^2}}}{{{\mu ^2} + {\sigma ^2}}}} \right)}^N}} \geq 1
		\] 
		and note that the inequality above is strict if $\mu \neq 0$. Thus, it follows that
		\[
		\overline{\mathcal{G}}_{K_{approx}^*}(N) = \left( {{{\left( {1 + \frac{{{\mu ^2}}}{{{\mu ^2} + {\sigma ^2}}}} \right)}^N} - 1} \right)V\left( 0 \right) \geq 0
		\]
		and if $\mu \neq 0$, we see that  $\overline{\mathcal{G}}_{K_{approx}^*}(N)>0$ follows~immediately. 
	\end{proof}

	{\bf Remark:} The statement of the corollary above holds true for the Merton's formula $
	K=\widetilde{K}_{approx}^*
	$. Moreover, with the aid of the corollary, a  somewhat stronger result, recorded below, can be proven.

	\begin{theorem}[\textbf{Expected  Growth Property of Optimum}]{\it For~\mbox{$
				K=K_{approx}^*$}, the expected cumulative gain or loss function satisfies 
			$
			\overline{\mathcal{G}}_{K_{approx}^*} (N+1) \geq 	\overline{\mathcal{G}}_{K_{approx}^*} (N) \geq 0
			$ for all $N \geq 1$.
		}
	\end{theorem}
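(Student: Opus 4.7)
The plan is to reduce the inequality to a simple monotonicity statement about a power sequence, leveraging the closed-form expression for $\overline{\mathcal{G}}_{K_{approx}^*}(N)$ already obtained in Lemma~\ref{Lemma: Expected Cumulative Gain or Loss}. Specifically, I would introduce the shorthand
\[
\alpha := 1 + \frac{\mu^2}{\mu^2 + \sigma^2},
\]
so that Lemma~\ref{Lemma: Expected Cumulative Gain or Loss} rewrites the expected cumulative gain or loss as $\overline{\mathcal{G}}_{K_{approx}^*}(N) = (\alpha^N - 1)V(0)$. The nonnegativity bound $\overline{\mathcal{G}}_{K_{approx}^*}(N) \geq 0$ is then immediate by invoking the preceding Positive Expectation Corollary, so the work reduces entirely to proving the monotonicity claim $\overline{\mathcal{G}}_{K_{approx}^*}(N+1) \geq \overline{\mathcal{G}}_{K_{approx}^*}(N)$.

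For the monotonicity step, I would compute the difference
\[
\overline{\mathcal{G}}_{K_{approx}^*}(N+1) - \overline{\mathcal{G}}_{K_{approx}^*}(N) = \alpha^N(\alpha - 1)V(0),
\]
and then observe that $\alpha - 1 = \mu^2/(\mu^2 + \sigma^2) \geq 0$ since $\mu^2 \geq 0$ and $\mu^2 + \sigma^2 > 0$ (the denominator is strictly positive provided the returns are nondegenerate or $\mu \neq 0$; the trivial case $\mu = \sigma = 0$ makes $K_{approx}^*$ undefined and can be excluded). Because $\alpha \geq 1$ also forces $\alpha^N \geq 1 > 0$, and $V(0) > 0$ by hypothesis, each factor on the right-hand side is nonnegative, yielding the desired inequality.

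Honestly, there is no serious obstacle here: once Lemma~\ref{Lemma: Expected Cumulative Gain or Loss} and the Positive Expectation Corollary are in hand, the theorem is essentially the observation that $\alpha^N$ is nondecreasing in $N$ whenever $\alpha \geq 1$. The only mildly delicate point is confirming that $\alpha \geq 1$ always holds under the standing assumption $K_{approx}^* \in \mathcal{K}$, which follows immediately from the sign of $\mu^2$. I would close the proof by remarking that the inequality is strict whenever $\mu \neq 0$, paralleling the strict version of the Positive Expectation Corollary, which gives a slightly sharper statement at no extra cost.
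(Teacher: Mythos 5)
Your proposal is correct and follows essentially the same route as the paper: it invokes the closed form from Lemma~\ref{Lemma: Expected Cumulative Gain or Loss} and observes that $\bigl(1+\mu^2/(\mu^2+\sigma^2)\bigr)^N$ is nondecreasing in $N$ while $V(0)>0$, with nonnegativity coming from the Positive Expectation Corollary. Your explicit difference computation $\alpha^N(\alpha-1)V(0)\geq 0$ merely spells out the monotonicity step the paper states directly, so there is no substantive difference.
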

	
	\begin{proof}
		Observe that
		\[
		\overline{\mathcal{G}}_{K_{approx}^*} (N)  = \left( {{{\left( {1 + \frac{{{\mu ^2}}}{{{\mu ^2} + {\sigma ^2}}}} \right)}^N} - 1} \right)V\left( 0 \right).
		\] 	
		Since the term ${{{\left( {1 + \frac{{{\mu ^2}}}{{{\mu ^2} + {\sigma ^2}}}} \right)}^N} - 1}$ is increasing in $N$  and $V(0)>0$, it immediately follows that
		\[
		\overline{\mathcal{G}}_{K_{approx}^*} (N+1) \geq 	\overline{\mathcal{G}}_{K_{approx}^*} (N). \qedhere
		\]
	\end{proof}
	
	
	In finance, the \textit{variance} is a widely used risk metric; e.g., see~\cite{luenberger_2013}.
	Thus, it is interesting to study the variance of the cumulative gain or loss function induced by the approximate solution, call it ${\rm var}(\mathcal{G}_{K_{approx}^*}(N))$.  The following result gives a closed-form expression on it.

	
	\begin{lemma}[\textbf{Variance of Cumulative Gain or Loss}]
		{\it Given any linear feedback $K \in \mathcal{K}$ and integer~\mbox{$N \geq 1$},  the variance of the cumulative gain or loss function satisfies
			$$
			{\rm var}(\mathcal{G}_{K}(N)) = \left( (\mu_{_K}^2 + \sigma_{_K}^2 )^N - \mu_{_K}^{2N}\right)V^2(0).
			$$
			where 
			$\mu_{_K} :=  1 + K\mu$ and $\sigma_{_K} := K \sigma.$ In addition, for~\mbox{$K=K_{approx}^*$}, then
			{\footnotesize
				$$
				{\rm var}(\mathcal{G}_{K_{approx}^*}(N)) =\left(\left(\frac{4 \mu ^2+\sigma ^2}{\mu ^2+\sigma ^2}\right)^{N}-\left(\frac{2\mu ^2+\sigma ^2}{\mu ^2+\sigma ^2}\right)^{2 N}\right)V^2(0).
				$$}
		}
	\end{lemma}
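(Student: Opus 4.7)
The plan is to reduce the variance computation to second-moment calculations on the telescoping product that defines $V(N)$, and then specialize the resulting expression to the approximate optimum.

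First, since $V(0)$ is deterministic, we have $\mathcal{G}_K(N)=V(N)-V(0)$ so ${\rm var}(\mathcal{G}_K(N)) = {\rm var}(V(N)) = \mathbb{E}[V(N)^2] - (\mathbb{E}[V(N)])^2$. The first moment is already available from Lemma~\ref{Lemma: Expected Cumulative Gain or Loss}: $\mathbb{E}[V(N)] = (1+K\mu)^N V(0) = \mu_{_K}^N V(0)$. For the second moment I would write
\[
V(N)^2 = V^2(0)\prod_{k=0}^{N-1}(1+KX(k))^2,
\]
invoke the i.i.d. hypothesis on the $X(k)$ to factor the expectation, and observe that a single-stage second moment expands as
\[
\mathbb{E}[(1+KX(0))^2] = 1 + 2K\mu + K^2 \mathbb{E}[X^2(0)] = (1+K\mu)^2 + K^2 \sigma^2 = \mu_{_K}^2 + \sigma_{_K}^2.
\]
Substituting gives $\mathbb{E}[V(N)^2] = (\mu_{_K}^2+\sigma_{_K}^2)^N V^2(0)$, and subtracting the squared mean yields the stated general formula.

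For the specialization to $K=K_{approx}^*=\mu/(\mu^2+\sigma^2)$, I would compute
\[
\mu_{_{K_{approx}^*}} = 1 + \frac{\mu^2}{\mu^2+\sigma^2} = \frac{2\mu^2+\sigma^2}{\mu^2+\sigma^2}, \qquad \sigma_{_{K_{approx}^*}}^2 = \frac{\mu^2\sigma^2}{(\mu^2+\sigma^2)^2},
\]
and then simplify $\mu_{_{K_{approx}^*}}^2 + \sigma_{_{K_{approx}^*}}^2$ by expanding the numerator $(2\mu^2+\sigma^2)^2 + \mu^2\sigma^2 = 4\mu^4 + 5\mu^2\sigma^2 + \sigma^4$ and noticing the factorization $(4\mu^2+\sigma^2)(\mu^2+\sigma^2)$, which collapses the ratio to $(4\mu^2+\sigma^2)/(\mu^2+\sigma^2)$. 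Plugging both this and $\mu_{_{K_{approx}^*}}^{2N}$ into the general formula produces the displayed closed form.

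The main obstacle is minor: essentially, the key algebraic check that $(2\mu^2+\sigma^2)^2 + \mu^2\sigma^2$ factors as $(4\mu^2+\sigma^2)(\mu^2+\sigma^2)$, which is what collapses an otherwise messy ratio into the clean $N$-th power form. Everything else follows from standard i.i.d. factorization and the definition of variance; in particular, no interchange of limits or differentiation under the expectation is needed, so the proof is essentially a direct calculation once the product structure of $V(N)$ is exploited.
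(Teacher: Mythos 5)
Your proposal is correct and follows essentially the same route as the paper: compute $\mathbb{E}[V(N)]$ and $\mathbb{E}[V^2(N)]$ via the i.i.d.\ factorization of the product $\prod_{k}(1+KX(k))$, expand the single-stage second moment as $(1+K\mu)^2+K^2\sigma^2$, and subtract the squared mean. The only difference is that you carry out explicitly the specialization to $K=K_{approx}^*$ (including the factorization $(2\mu^2+\sigma^2)^2+\mu^2\sigma^2=(4\mu^2+\sigma^2)(\mu^2+\sigma^2)$), which the paper leaves as a ``straightforward calculation,'' and your algebra there checks out.
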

	
	\begin{proof} Fix $N \geq 1$, begin by noting that
		$$
		{\rm var}(\mathcal{G}_K(N)) = \mathbb{E}\left[ {{V^2}\left( N \right)} \right] - {\left( {\mathbb{E}\left[ {V\left( N \right)} \right]} \right)^2}.
		$$
		Now, using the fact that $X(k)$ are i.i.d., we observe that  
		\begin{align*}
		{\mathbb{E}\left[ {V\left( N \right)} \right]} 
		&= {\left( {1 + K\mu} \right)}^N {V(0)}
		\end{align*}
		and hence ${\left( {\mathbb{E}\left[ {V\left( N \right)} \right]} \right)^2} = {\left( {1 + K\mu} \right)}^{2N}V^2(0).$ On the other hand, using the fact that $X(k)$ are i.i.d. again, we have 
		\begin{align*}
		{\mathbb{E}\left[ {V^2\left( N \right)} \right]} 
		&=\prod\limits_{k = 0}^{N-1}  \mathbb{E}\left[ {\left( {1 + KX(k)} \right)^2}  \right]V^2(0) \\
		&= \mathbb{E}\left[  1 + 2KX(0) + K^2X^2(0) \right]^N V^2(0) \\
		&= (  1 + 2K\mu+ K^2(\mu^2+\sigma^2) )^N V^2(0)\\
		&=(  (1 + K\mu)^2+K^2 \sigma^2 )^N V^2(0).
		\end{align*}
		Thus, we have
		$$
		{\rm var}(\mathcal{G}_{K_{approx}^*}(N)) = \left( (\mu_{_K}^2 + \sigma_{_K}^2 )^N - \mu_{_K}^{2N}\right)V^2(0).
		$$
		where $\mu_{_K} =  1 + K\mu$ and $\sigma_{_K} = K \sigma$. To complete the proof, substituting $K=K_{approx}^*$ into the equality above, a straightforward calculation leads~to
		the desired result. 
	\end{proof}
	
	{\bf Remark:} It is trivial to see that if $\sigma = 0$, then
	$
	{\rm var}(\mathcal{G}_{K}(N)) = 0.
	$

	\subsection{Variance of Logarithmic Growth}
	According to~\cite{luenberger_2013}, it is also useful to know the variance of expected log-growth since this quantity can be viewed as an additional risk metric, which is suitable for the Kelly's expected log-growth maximization framework. The following lemma summarizes the variance in a closed-form.
	
	\begin{lemma}[\textbf{Variance of Logarithmic Growth}]
		\textit{The variance of logarithmic growth is given by
			\[
			{\rm var}\bigg(\log \frac{V(N)}{V(0)}\bigg) = N\bigg(\mathbb{E}[  \log^2(1+KX(0))]- g^2(K)\bigg).
			\]
		}
	\end{lemma}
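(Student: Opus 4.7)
The plan is to exploit the multiplicative form of $V(N)$ so that the logarithm turns into a sum of i.i.d.\ increments, after which the variance formula is elementary. Concretely, from the closed-form expression $V(N) = \prod_{k=0}^{N-1}(1+KX(k))\,V(0)$ derived earlier in the paper, I would first write
\[
\log \frac{V(N)}{V(0)} = \sum_{k=0}^{N-1} \log(1+KX(k)).
\]

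Next, I would invoke the i.i.d.\ assumption on the returns $X(k)$, which carries over to the transformed sequence $Y(k) := \log(1+KX(k))$ since each $Y(k)$ is a measurable function of the single variable $X(k)$. The variance of a sum of i.i.d.\ random variables is $N$ times the variance of one summand, giving
\[
{\rm var}\!\left(\log \frac{V(N)}{V(0)}\right) = N\,{\rm var}(\log(1+KX(0))).
\]

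I would then expand the single-stage variance using the standard identity ${\rm var}(Y) = \mathbb{E}[Y^2] - (\mathbb{E}[Y])^2$, namely
\[
{\rm var}(\log(1+KX(0))) = \mathbb{E}[\log^2(1+KX(0))] - (\mathbb{E}[\log(1+KX(0))])^2.
\]
Finally, I would identify $\mathbb{E}[\log(1+KX(0))] = g(K)$ by the formula for $g(K)$ already established in Section~\ref{Section 2: Problem Formulation}, yielding the claimed expression.

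There is essentially no obstacle here; the one small point worth being careful about is ensuring the expectations are finite. Because $K \in \mathcal{K}$ and $X_{\min} \le X(0) \le X_{\max}$, the random variable $1 + KX(0)$ is bounded and strictly positive by Lemma~\ref{lemma: Survival Lemma}, so $\log(1+KX(0))$ is a bounded random variable and both $\mathbb{E}[\log(1+KX(0))]$ and $\mathbb{E}[\log^2(1+KX(0))]$ are finite, legitimizing the variance decomposition.
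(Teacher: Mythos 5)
Your proof is correct and follows essentially the same route as the paper: write $\log(V(N)/V(0))$ as a sum of the i.i.d.\ terms $\log(1+KX(k))$, use additivity of variance for independent summands, and expand the single-stage variance as $\mathbb{E}[\log^2(1+KX(0))]-g^2(K)$. Your added remark on boundedness of $\log(1+KX(0))$ (hence finiteness of the moments) is a small but welcome refinement that the paper leaves implicit.
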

	
	\begin{proof}
		Observe that
		\[
		\log\frac{V(N)}{V(0)} = \sum_{k=0}^{N-1}\log(1+KX(k)).
		\]
		Hence, with the aid of i.i.d. of $X(k)$, it follows that
		\[
		{\rm var}\bigg( \sum_{k=0}^{N-1} Z_k\bigg) =  \sum_{k=0}^{N-1} {\rm var}(Z_k)
		\] where $Z_k:= \log(1+KX(k))$.
		In addition,  it is readily verified that for each $k=0,1,\ldots,N-1$,
		\begin{align*}
		{\rm var}(Z_k)&=\mathbb{E}[\log^2(1+KX(k))]-g^2(K).
		\end{align*}
		Therefore, a straightforward calculation leads to
		\begin{align*}
		{\rm var}\bigg(\log \frac{V(N)}{V(0)}\bigg) &= \sum_{k=0}^{N-1} \mathbb{E}[\log^2(1+KX(k))]-N g^2(K)\\
		&= N\bigg(\mathbb{E}[  \log^2(1+KX(0))]- g^2(K)\bigg)
		\end{align*}
		which is desired.
	\end{proof}

	\subsection{Performance Via Approximation: A Revisit}
	Henceforth, if \mbox{$K=K_{approx}^*$} is used, then~$g(K_{approx}^*)$ represents the corresponding expected logarithmic growth rate in wealth using the approximate optimum. Similarly, if~\mbox{$K=K^*$}, then $g(K^*)$ represents the expected logarithmic growth rate using true optimum. 
	The following proposition  estimates an upper bound for the difference between~$g(K_{approx}^*)$ and~$g(K^*)$.

	\begin{prop} {\it If $K^* \neq K_{approx}^*$ and both of them satisfies the condition stated in the Survival Lemma, then 
			\[
			0\leq g(K^*) - g(K_{approx}^*)\leq  \log \mathbb{E} \left[ {\frac{{1 + {K^*}X\left( 0 \right)}}{{1 + K_{approx}^*X\left( 0 \right)}}} \right] .
			\]
			Otherwise,
			$
			|g(K^*) - g(K_{approx}^*)|=0.
			$
		}
	\end{prop}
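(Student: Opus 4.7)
The approach is to decompose the two-sided inequality into a trivial lower bound and a one-line application of Jensen's inequality for the upper bound. The lower bound $g(K^*) - g(K_{approx}^*) \geq 0$ is immediate from the definition of $K^*$ as the maximizer of $g$ over $\mathcal{K}$, combined with the standing assumption that $K_{approx}^*$ also lies in $\mathcal{K}$; so $g(K^*) \geq g(K)$ for every feasible $K$, and in particular for $K = K_{approx}^*$.

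For the upper bound, I would first rewrite the difference as a single expectation of a log-ratio. Using linearity of expectation,
\[
g(K^*) - g(K_{approx}^*) = \mathbb{E}\left[\log \frac{1+K^*X(0)}{1+K_{approx}^*X(0)}\right].
\]
Before invoking Jensen, it is essential to verify that the ratio inside the logarithm is strictly positive almost surely; this is precisely where the hypothesis that both gains satisfy the Survival Lemma enters. Since $-1/X_{\max} < K < 1/|X_{\min}|$ for both $K = K^*$ and $K = K_{approx}^*$, Lemma~\ref{lemma: Survival Lemma} applied at a single stage guarantees $1 + K^*X(0) > 0$ and $1 + K_{approx}^*X(0) > 0$ for every realization of $X(0) \in [X_{\min}, X_{\max}]$, so the log-ratio is well defined. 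Then, by concavity of $\log$, Jensen's inequality yields
\[
\mathbb{E}\left[\log \frac{1+K^*X(0)}{1+K_{approx}^*X(0)}\right] \leq \log \mathbb{E}\left[\frac{1+K^*X(0)}{1+K_{approx}^*X(0)}\right],
\]
which is the desired upper bound.

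The degenerate case is handled separately: when $K^* = K_{approx}^*$, the two growth rates trivially agree, giving $|g(K^*) - g(K_{approx}^*)| = 0$ with no calculation required.

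There is no real technical obstacle in this argument; it is essentially bookkeeping around one application of Jensen's inequality. The only delicate point worth making explicit in the write-up is the invocation of the Survival Lemma to ensure both $1 + K^*X(0)$ and $1 + K_{approx}^*X(0)$ remain positive so that the log-ratio is integrable and Jensen is legitimate. Everything else follows directly.
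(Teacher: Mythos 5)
Your proof is correct and follows essentially the same route as the paper: the lower bound from optimality of $K^*$ over $\mathcal{K}$, the rewriting of the difference as an expected log-ratio whose positivity is guaranteed by the Survival Lemma hypothesis, and a single application of Jensen's inequality for the upper bound, with the degenerate case $K^* = K_{approx}^*$ handled trivially. No gaps; your explicit remark on why the Survival Lemma is needed before invoking Jensen matches the paper's reasoning.
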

	
	\begin{proof} When $K^* = K_{approx}^*$, by definition of $g$, it is trivial to see that $
		|g(K^*) - g(K_{approx}^*)|=0.
		$
		To complete the proof, it suffices to show that the desired upper bound holds when $K^* \neq K_{approx}$.  Note that $g(K^*) \geq g(K_{approx}^*)$, hence, it is obvious that $0\leq g(K^*) - g(K_{approx}^*)$. On the other hand, using the fact that $K^*$ and $K_{approx}^*$ satisfies the condition stated in the Survival Lemma, it is readily seen that both $1+K^*X(0) > 0$ and $1+K_{approx}^*X(0)> 0$ for all admissible~$X_{\min} \leq X(0) \leq X_{\max}$ with probability one.  Now,  using Jensen's inequality on the logarithmic function, we obtain
		\begin{align*}
		{g\left( K^*\right) - g\left( {K_{approx}^*} \right)}  &= \mathbb{E} \left[ {\log \frac{{1 + {K^*}X\left( 0 \right)}}{{1 + K_{approx}^*X\left( 0 \right)}}} \right] \hfill \\[.5ex]
		&\leq \log \mathbb{E} \left[ {\frac{{1 + {K^*}X\left( 0 \right)}}{{1 + K_{approx}^*X\left( 0 \right)}}} \right].  \qedhere
		\end{align*} 
	\end{proof}
	
	{\bf Remarks:} It is worth noting that inside the upper bound of the performance difference $\mathbb{E} \left[ {\frac{{1 + {K^*}X\left( 0 \right)}}{{1 + K_{approx}^*X\left( 0 \right)}}} \right]$  is of the form of a linear-fractional function. Thus, if needed, one can carry out a next level estimate on the upper bound by invoking \textit{linear-fractional programming} technique. That is, one can  consider an optimization problem given by
	\begin{align*}
	&\max_{z} \frac{1+K^* z}{1+K_{approx}^*z}\\
	&\text{subject to} \begin{bmatrix}
	1 \\-1
	\end{bmatrix}z \leq \begin{bmatrix}
	X_{\max} \\-X_{\min}
	\end{bmatrix}.
	\end{align*}
	Then, using Charnes-Cooper transformation; see \cite{Boyd_2004} and~\cite{Charnes_Cooper_1962}, one can readily recast above into a linear programming problem then solve the unknown $z$ above in a very efficient way.

	\section{Conclusion and Future Works}
	In this paper, we examined several properties for the existing approximate solution using the celebrated maximization of expected logarithmic growth as performance metric. We see that the solution indeed provides a certain degree of insights on risk-return tradeoffs. Several technical results such as best achievable upper bound performance, positivity of expected cumulative  gain or loss and results related to survivability are provided. 
	
	Regarding further research, one possible continuation  is to generalize  the betting  function  to include  time-varying feedback gain $K$; i.e., $K=K(k)$. Another immediate direction for future research would be  to extend our framework  to the stock trading scenario, which involves multiple stocks into considerations.  Finally, since the Kelly criterion requires the bettor to know the distribution of returns, it is natural to ask  what if the underlying distributions are not trustworthy? What is the associated performance lead by approximate solution? One possible approach is to formulate a Kelly problem involving uncertain distributional considerations; e.g., see~\cite{Rujeerapaiboon_Kuhn_Wiesemann_2015}.

	
	\bibliographystyle{ieeetr}
	\bibliography{Note_Kelly_Approximation_refs}


\end{document}